\newtheorem{thm}{Theorem}[section] %the resolution could also be [subsection]
\newtheorem{algo}[thm]{Algorithm}
\newtheorem{prop}[thm]{Proposition}
\newtheorem{rem}[thm]{Remark}
\newcommand\freegen[1]{{\left<#1\right>}}
\newcommand\fring{{D\freegen{x_1,\dots,x_N}}}
\newcommand\set[1]{\{#1\}}
\def\sub{{\subseteq}}
\begin{document}

%  Leave these commented lines here
% \input{elaheader-volx-xx.tex}
% \setcounter{page}{1}

% \renewcommand{\thefootnote}{\fnsymbol{footnote}}
% \renewcommand{\thefootnote}{\arabic{footnote}}
% \renewcommand{\theequation}{\thesection.\arabic{equation}}

\bibliographystyle{plain}
\title{
General polynomials over division algebras and left eigenvalues\thanks{Received by the editors on Month x, 200x.
Accepted for publication on Month y, 200y   Handling Editor: .}}
% Leave blank; editors will write the exact dates above

\author{
Adam\ Chapman\thanks{Department of Mathematics,
Bar-Ilan University, Ramat-Gan 52115, Israel
(adam1chapman@yahoo.com). Ph.D student under the supervision of Prof. Uzi Vishne}}
% Remember to put \and between any two authors

% Note that \footnotemark[3]} is used for the third author
% because of the same affiliation for the second and third authors.
% If the same affiliation is to be used for the first and second authors,
% \footnotemark[2] should be used instead of \thanks{} for the second author.

% Authors and running title to go on top of each page
\pagestyle{myheadings}
\markboth{A.\ Chapman}{General polynomials over division algebras and left eigenvalues}
\maketitle

\begin{abstract}
In this paper, an isomorphism between the ring of general polynomials over a division ring of degree $p$ over its center $F$ and the group ring of the free monoid with $p^2$ variables is presented.
Using this isomorphism, the characteristic polynomial of a matrix over any division algebra is defined, i.e. a general polynomial with one variable over the algebra whose roots are precisely the left eigenvalues.
Plus, it is shown how the left eigenvalues of a $4 \times 4$ matrices over any division algebra can be found by solving a general polynomial equation of degree $6$ over that algebra.
\end{abstract}

\begin{keywords}
General Polynomials, Characteristic Polynomial, Determinants, Left Eigenvalues, Quaternions.
\end{keywords}
\begin{AMS}
12E15, 16S10, 11R52
\end{AMS}

%%%%%%%%%%%%%%%%%%%%%%%%%%%%%%%%%%%%%%%%%%%%%%%%%%%%%%%%%%%%%
\section{Introduction}

\subsection{Polynomial rings over division algebras}

Let $F$ be a field and $D$ be a division ring over $F$ of degree $p$.
We adopt the terminology in \cite{GM}. Let $D_L[z]$ denote the usual
ring of polynomials over $D$ where the variable
$z$ commutes with every $d \in D$. When substituting a value we
consider the coefficients as though they are placed on the
left-hand side of the variable. The substitution map is not a ring
homomorphism in general. For example, for non-central $d \in D$ and the substitution $S_d: D_L[z] \rightarrow D$, if $f(z)=az$ and
$a d \neq d a$ then $S_d(f^2)=S_d(a^2 z^2)=a^2 d^2$ while
$S_d(f)^2=S_d(a z)^2=(a d)^2 \neq S_d(f^2)$.

The ring $D_G[z]$ is, by definition, the (associative) ring of
polynomials over $D$, where $z$ is assumed to commute with every
$d \in F=Z(D)$, but not with arbitrary elements of $D$. For
example, if $d \in D$ is non-central, then $dz^2$, $zdz$ and $z^2
d$ are distinct elements of this ring. There is a ring epimorphism
$D_G[z] \rightarrow D_L[z]$, defined by $z \mapsto z$, whose
kernel is the ideal generated by the commutators $[d,z]$ ($d \in
D$). Unlike the situation for $D_L[z]$, the substitution maps from
$D_G[z]$ to $D$ are all ring homomorphisms. Polynomials from
$D_G[z]$ are called ``general polynomials",
for example $z i z+j z i+ z i j+5 \in \mathbb{H}_G[z]$.

Polynomials in $D_L[z]$ or and polynomials in $D_G[z]$ which ``look like" polynomials in $D_L[z]$, i.e. the coefficients are placed on the left-hand side of the variable, are called ``left" or ``standard polynomials", for example $z^2+i z+j \in \mathbb{H}_G[z]$.

Let $\fring$ be the ring of multi-variable polynomials, where for all $i$, $x_i$ commutes with every $d \in D$ and is not assumed to commute with $x_j$ for $i \neq j$. This is the group algebra
of the free monoid $\freegen{x_1,\dots,x_N}$ over $D$. The
commutative counterpart is $D_L[x_1,\dots,x_N]$, which is the ring
of multi-variable polynomials where for all $i$, $x_i$ commutes
with every $d \in D$ and with every $x_j$ for $i \neq j$.

For further reading on what is generally known about polynomial
equations over division rings see \cite{L}.

\subsection{Left eigenvalues of matrices over division algebras}

Given a matrix $A \in M_n(D)$, a left eigenvalue of $A$ is an element $\lambda \in D$ for which there exists a nonzero vector $v \in D^{n \times 1}$ such that $A v=\lambda v$.

For the special case of $D=\mathbb{H}$\footnote{The algebra of real quaternions} and $n=2$ it was proven by Wood in \cite{Wood} that the left eigenvalues of $A$ are the roots of a standard quadratic quaternion polynomial.
In \cite{So} proved that for $n=3$, the left eigenvalues of $A$ are the roots of a general cubic quaternion polynomial.

In \cite{Macias-Virgos}, Mac\'{i}as-Virg\'{o}s and Pereira-S\'{a}ez gave another proof to Wood's result. Their proof makes use of the Study determinant.

Given a matrix $A \in M_n(\mathbb{H})$, there exist unique matrices $B,C \in M_n(\mathbb{C})$ such that $A=B+C j$.
The Study determinant of $A$ is
$\det \left[ \begin{array}{lr} B  & -\overline{C} \\ C &  \overline{B}  \end{array}
\right]$.
The Dieudonn\'{e} determinant is (in this case) the square root of the Study determinant\footnote{In \cite{Macias-Virgos} the Study determinant is defined to be what we call the Dieudonn\'{e} determinant.}.
For further information about these determinants see \cite{Aslaksen}.

\section{The isomorphism between the ring of general polynomials and the group ring of the free monoid with $[D:F]$ variables}

Let $N=p^2$, i.e. $N$ is the dimension of $D$ over its center $F$. In particular there exist invertible elements $a_1,\dots,a_{N-1} \in D$ such that $D=F+a_1 F+\dots+a_{N-1} F$.

Let $h : D_G[z] \rightarrow \fring$ be the homomorphism for which $h(d)=d$ for all $d \in D$, and $h(z)=x_1+a_1 x_2+\dots+a_{N-1} x_N$.
$D_L[x_1,\dots,x_N]$ is a quotient ring of $\fring$.
Let $g: \fring \rightarrow D_L[x_1,\dots,x_N]$ be the standard epimorphism.

In \cite[Theorem~6]{GM} it says that if $D$ is a division algebra then homomorphism $g \circ h : D_G[z] \rightarrow D_L[x_1,\dots,x_N]$ is an epimorphism. The next theorem is a result of this fact.

\begin{thm} \label{isom}
The homomorphism $h : D_G[z] \rightarrow \fring$ is an isomorphism, and therefore $D_G[z] \cong \fring$.
\end{thm}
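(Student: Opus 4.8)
The plan is to exploit the natural grading carried by both rings and to reduce everything to degree one, where the already-stated surjectivity of $g\circ h$ does the real work.

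First I would record the grading. Grade $D_G[z]$ by the number of occurrences of $z$ and grade $\fring$ by total degree in $x_1,\dots,x_N$; then $h$ is a graded homomorphism, since $h(d)=d$ lies in degree $0$ and $h(z)=x_1+a_1x_2+\dots+a_{N-1}x_N$ lies in degree $1$. The degree-$k$ component of $D_G[z]$ is spanned over $F$ by the monomials $d_0zd_1z\cdots zd_k$ with $d_i\in D$, so the $F$-multilinear surjection $D^{\otimes_F(k+1)}\twoheadrightarrow (D_G[z])_k$ (well defined because $z$ commutes with $F$) gives $\dim_F(D_G[z])_k\le N^{k+1}$. On the other side the degree-$k$ component of $\fring$ is free over $D$ on the $N^k$ words of length $k$, so $\dim_F(\fring)_k=N\cdot N^k=N^{k+1}$. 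Thus the two graded pieces have matching finite $F$-dimension, and it will suffice to prove that $h$ is onto.

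Next I would prove surjectivity, reducing it to degree one. Since $\fring$ is generated as a ring by $D$ together with $x_1,\dots,x_N$, and these generators lie in degrees $0$ and $1$, it is enough to show that the whole degree-one component $\bigoplus_{i=1}^N Dx_i$ lies in the image of $h$. Here I use that the standard epimorphism $g$ is itself graded and is an isomorphism in degree one: the kernel of $g$ is generated by the commutators $[x_i,x_j]$, which are homogeneous of degree $2$, so $g$ alters nothing in degree one and $g|_1\colon(\fring)_1\to D_L[x_1,\dots,x_N]_1$ is bijective. By the cited fact, $g\circ h$ is an epimorphism; being a graded map, it is surjective in each degree, in particular in degree one. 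Composing with $(g|_1)^{-1}$ then shows that $h|_1$ is surjective, so $\bigoplus_i Dx_i$ is contained in the image; in particular every $x_i$ is, whence $h$ is surjective. Finally, injectivity follows by a dimension count: as $h$ is graded and surjective, each $h|_k$ is surjective, so $\dim_F(D_G[z])_k\ge N^{k+1}$, which together with the bound $\dim_F(D_G[z])_k\le N^{k+1}$ forces equality, and a surjection between $F$-spaces of equal finite dimension is bijective. Hence $h|_k$ is an isomorphism for every $k$, and therefore so is $h$.

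I expect the only genuine obstacle to be the degree-one surjectivity. With the stated result on $g\circ h$ in hand it is painless, since one need only observe that $g$ is invertible in degree one; without it one would instead have to show directly that the $F$-linear map $d_0\otimes d_1\mapsto(d_0a_{i-1}d_1)_{i=1}^N$ is onto, i.e. invoke the isomorphism $D\otimes_F D^{\mathrm{op}}\cong M_N(F)$ coming from central simplicity. The remaining care is purely bookkeeping: keeping the two gradings aligned and tracking the $F$-dimensions $N^{k+1}$ on both sides.
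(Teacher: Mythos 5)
Your proof is correct and takes essentially the same route as the paper's: grade both rings, observe that $h$ is graded with $F$-dimension $N^{k+1}$ matching in each degree, reduce surjectivity to the degree-one component, and obtain it from the cited epimorphism $g\circ h$ together with the bijectivity of $g$ in degree one. You are in fact slightly more careful than the paper, which asserts outright that the monomials $b_1zb_2\cdots zb_{n+1}$ form a basis of $G_n$ (a priori only a spanning set, as your $\le$-then-equality argument recognizes) and dismisses the degree-one bijectivity of $g$ as ``an easy exercise,'' whereas you justify it via the kernel being generated by the degree-two commutators $[x_i,x_j]$.
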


\begin{proof}
$h$ is well-defined because $z$ commutes only with the center.

Both $D_G[z]$ and $\fring$ can be graded, $D_G[z]=G_0 \bigoplus G_1 \bigoplus \dots$ and $\fring=H_0 \bigoplus H_1 \bigoplus \dots$ such that for all $n$, $G_n$ and $H_n$ are spanned by monomials of degree $n$.

For all $n$, $h(G_n) \subseteq H_n$.
Furthermore, the basis of $G_n$ as a vector space over $F$ is $\set{b_1 z b_2 \dots b_n z b_{n+1} : b_1,\dots,b_{n+1} \in {1,a_1,\dots,a_{N-1}}}$, which means that $[G_n:F]=N^{n+1}$.
Plus, the basis of $H_n$ as a vector space over $F$ is $\set{b x_{k_1} x_{k_2} \dots x_{k_n} : b \in \set{1,a_1,\dots,a_{N-1}}, \forall_j k_j \in \set{1,\dots,N}}$, hence $[H_n:F]=N \cdot N^n=N^{n+1}=[G_n:F]$.

Consequently, it is enough to prove that $h|_{G_n} : G_n \rightarrow H_n$ is an epimorphism.
For that, it is enough to prove that for each $1 \leq k \leq N$, $x_k$ has a co-image in $G_1$.
The reduced epimorphism $g|_{H_1}$ is an isomorphism\footnote{An easy exercise}, and since $g \circ h|_{G_1}$ is an epimorphism, $h|_{G_1}$ is also an epimorphism, and that finishes the proof.
\end{proof}

Here is a suggested algorithm for finding the co-image of $x_k$ for any $1 \leq k \leq N$:

\begin{algo}
Let $p_1=z$, therefore $h(p_1)=x_1+a_1 x_2+\dots+a_{N-1} x_N$.
We shall define a sequence $\set{p_j : j=1,\dots,n} \sub G_1$ as follows:
If there exists a monomial in $h(p_j)$ whose coefficient $a$ does not commute with the coefficient of $x_k$, denoted by $c$, then we shall define $p_{j+1}=a p_j a^{-1}-p_j$, by which we shall annihilate at least one monomial (the one whose coefficient is $a$), and yet the element $x_k$ will not be annihilated, because $c x_k$ does not commute with $a$.

If $c$ commutes with all the other coefficients then we shall pick some monomial which we want to annihilate. Let $b$ denote its coefficient. Now we shall pick some $a \in D$ which does not commute with $c b^{-1}$ and define $p_{j+1}=b a p_j b^{-1} a^{-1}-p_j$.

The element $x_k$ is not annihilated in this process,
because if we assume that it does at some point, let us say it is annihilated in $h(p_{j+1})$, then $b a c b^{-1} a^{-1}-c=0$.
Therefore $c^{-1} b a c b^{-1} a^{-1}=1$, hence $c b^{-1} a^{-1}=(c^{-1} b a)^{-1}=a^{-1} b^{-1} c$ and, since $b$ commutes with $c$, $a$ commutes with $c b^{-1}$ and that creates a contradiction.

In each iteration the length of $h(p_j)$ (the number of monomials in it) decreases by at least one, and yet the element $x_k$ always remains, and since the length of $h(p_1)$ is finite, this process will end with some $p_m$ for which $h(p_m)$ is a monomial.
In this case, $h(q_m)=c x_k$ and consequently $x_k=h(c^{-1} q_m)$.
\end{algo}

\subsection{Real Quaternions}\label{example}
Let $D=\mathbb{H}=\mathbb{R}+i \mathbb{R}+j \mathbb{R}+i j \mathbb{R}$. Now $h(z)=x_1+x_2 i+x_3 j+x_4 i j$\\
$h(z-j z j^{-1})=h(z+j z j)=2 x_2 i+2 x_4 i j$\\
$h((z+j z j)-i j (z+j z j) (i j)^{-1})=2 x_2 i+2 x_4 i j-i j (2 x_2 i+2 x_4 i j) (i j)^{-1}=4 x_2 i$\\ therefore $h^{-1}(x_2)=-\frac{1}{4} i ((z+j z j)+i j (z+j z j) i j))=-\frac{1}{4} (i z+i j z j-j z i j+z i)$.

Similarly, $h^{-1}(x_1)=\frac{1}{4} (z-i z i-j z j-i j z i j)$, $h^{-1}(x_3)=-\frac{1}{4} (j z-i j z i+i z i j+z j)$ and $h^{-1}(x_4)=-\frac{1}{4} (i j z-i z j+j z i+z i j)$.
Consequently, $\overline{z}=\overline{h^{-1}(x_1+x_2 i+x_3 j+x_4 i j)}=h^{-1}(x_1-x_2 i-x_3 j-x_4 i j)
=-\frac{1}{2} (z+i z i+j z j+i j z i j)$.

\section{The characteristic polynomial}\label{char}
Let $D,F,p,N$ be the same as they were in the previous section.

There is an injection of $D$ in $M_p(K)$ where $K$ is a maximal subfield of $D$.
(In particular, $[K:F]=p$.)
More generally, there is an injection of $M_k(D)$ in $M_{k p}(K)$ for any $k \in \mathbb{N}$.
Let $\widehat{A}$ denote the image of $A$ in $M_{k p}(K)$ for any $A \in M_k(D)$.

The determinant of $\widehat{A}$ is equal to the Dieudonn\'{e} determinant of $A$ to the power of $p$.
\footnote{The reduced norm of $A$ is defined to be the determinant of $\widehat{A}$}

Therefore $\lambda \in D$ is a left eigenvalue of $A$ if and only if $\det(\widehat{A-\lambda I})=0$.
Considering $D$ as an $F$-vector space $D=F+F a_1+\dots+F a_{N-1}$, we can write $\lambda=x_1+x_2 a_1+\dots+x_N a_{N-1}$ for some $x_1,\dots,x_N \in F$. Then $\det(\widehat{A-\lambda I}) \in F[x_1,\dots,x_N]$.
It can also be considered as a polynomial in $\fring$.
Now, there is an isomorphism $h : D_G[z] \rightarrow \fring$, and so $h^{-1} (\det(\widehat{A-\lambda I})) \in D_G[z]$.

Defining $p_A(z)=h^{-1} (\det(\widehat{A-\lambda I}))$ to be the characteristic polynomial of $A$, we get that the left eigenvalues of $A$ are precisely the roots of $p_A(z)$.

The degree of the characteristic polynomial of $A$ is therefore $k p$.

\begin{rem}
If one proves that the Dieudonn\'{e} determinant of $A-\lambda I$ is the absolute value of some polynomial $q(x_1,\dots,x_N) \in D_L[x_1,\dots,x_N]$ then we will be able to define the characteristic polynomial to be $h^{-1}(q(x_1,\dots,x_N))$ and obtain a characteristic polynomial of degree $k$.
\end{rem}

\section{The left eigenvalues of a $4 \times 4$ quaternion matrix}

Let $Q$ be a quaternion division $F$-algebra.
Calculating the roots of the characteristic polynomial as defined in Section \ref{char} is not always the best way to obtain the left eigenvalues of a given matrix.

The reductions Wood did in \cite{Wood} and So did in \cite{So} suggest that in order to obtain the left eigenvalues of a $2 \times 2$ or $3 \times 3$ matrix one can calculate the roots of a polynomial of degree $2$ or $3$ respectively, instead of calculating the roots of the characteristic polynomial whose degree is $p$ times greater.

In the next proposition we show how (under a certain condition) the eigenvalues of a $4 \times 4$ quaternion matrix can be obtained by calculating the roots of three polynomials of degree $2$ and one of degree $6$.

\begin{prop}
If $M=
\left[ \begin{array}{lr} A  & B \\ C &  D  \end{array}
\right]$ where $A,B,C,D \in M_2(Q)$ and $C$ is invertible then $\lambda$ is a left eigenvalue of $M$ if and only if it is either $e(\lambda)=f(\lambda) g(\lambda)=0$ or $e(\lambda) \neq 0$ and $e(\lambda)\overline{e(\lambda)} h(\lambda)-g(\lambda) \overline{e(\lambda)} f(\lambda)=0$ where $C (A-\lambda I) C^{-1} (D-\lambda I)-C B=
\left[ \begin{array}{lr} e(\lambda)  & f(\lambda) \\ g(\lambda) &  h(\lambda)  \end{array}
\right]$
\end{prop}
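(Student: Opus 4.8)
The plan is to turn the left-eigenvalue equation $Mv=\lambda v$ into a single $2\times 2$ system over $Q$ and then decide exactly when that system is singular. First I would split the eigenvector as $v=\left[\begin{array}{c} u \\ w \end{array}\right]$ with $u,w\in Q^{2\times 1}$, so that $Mv=\lambda v$ becomes the pair $(A-\lambda I)u+Bw=0$ and $Cu+(D-\lambda I)w=0$. Since $C$ is invertible, the second equation solves uniquely for $u=-C^{-1}(D-\lambda I)w$, and moreover $v\neq 0$ forces $w\neq 0$ (as $w=0$ would give $u=0$). Substituting this $u$ into the first equation and multiplying on the left by $-C$ yields $[C(A-\lambda I)C^{-1}(D-\lambda I)-CB]\,w=0$, that is $P(\lambda)w=0$, where $P(\lambda)=\left[\begin{array}{cc} e(\lambda) & f(\lambda) \\ g(\lambda) & h(\lambda) \end{array}\right]$ is the matrix named in the statement.

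Conversely, for any $\lambda$ and any nonzero $w$ with $P(\lambda)w=0$, setting $u=-C^{-1}(D-\lambda I)w$ and $v=\left[\begin{array}{c} u \\ w \end{array}\right]\neq 0$ recovers both block equations (the second by construction, the first after cancelling the invertible $C$), so $Mv=\lambda v$. This reduces the whole problem to a single clean statement: $\lambda$ is a left eigenvalue of $M$ if and only if the $2\times 2$ quaternion matrix $P(\lambda)$ has a nonzero right kernel, i.e.\ is singular over $Q$.

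It then remains to characterise singularity of $\left[\begin{array}{cc} e & f \\ g & h \end{array}\right]$ by elementary elimination over the division ring $Q$, splitting on whether $e$ vanishes. If $e\neq 0$, solving $es+ft=0$ gives $s=-e^{-1}ft$, and the second row collapses to $(h-ge^{-1}f)t=0$; since every nonzero quaternion is invertible, a nonzero kernel vector exists precisely when the Schur complement $h-ge^{-1}f$ vanishes. Using $e^{-1}=\overline{e}/(e\overline{e})$ together with the fact that the reduced norm $e\overline{e}\in F$ is central, I would multiply through by $e\overline{e}$ to rewrite this as $e\overline{e}h-g\overline{e}f=0$, which is the second alternative. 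If instead $e=0$, a direct inspection of $ft=0$ and $gs+ht=0$ shows that a nonzero kernel vector exists exactly when $f=0$ or $g=0$, i.e.\ when $fg=0$ in the division ring; together with $e=0$ this is the first alternative $e=fg=0$.

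The block substitution and the case $e=0$ are routine. The step I expect to require the most care is the reduction when $e\neq 0$: because $Q$ is noncommutative one must preserve the factor order in $g\overline{e}f$ and justify clearing $e^{-1}$ through the central norm rather than by unrestricted division, and one must keep in mind that ``singular'' here means a nonzero right kernel (columns), matching the convention $Mv=\lambda v$, not a left kernel. Once the order conventions and the centrality of $e\overline{e}$ are handled correctly, the two alternatives in the statement fall out, and everything else is bookkeeping.
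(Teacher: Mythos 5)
Your proof is correct, and it takes a genuinely different --- and more self-contained --- route than the paper's. The paper characterizes left eigenvalues via vanishing of the determinant of $M-\lambda I$ (in the Dieudonn\'{e}/Study sense), then invokes a Schur-complement determinant identity extended to quaternion matrices (the citation for which is actually left dangling in the source) to obtain $\det(M-\lambda I)=\det\bigl(C(A-\lambda I)C^{-1}(D-\lambda I)-CB\bigr)$, and finally converts vanishing of the resulting $2\times 2$ determinant into $e\overline{e}h-g\overline{e}f=0$; notably, it only treats the case $e(\lambda)\neq 0$ and never argues the alternative $e=fg=0$. You instead bypass determinant theory entirely: you split the eigenvector $v=(u,w)$, use invertibility of $C$ to eliminate $u$ (noting $w\neq 0$), reduce the eigenvalue equation to $P(\lambda)w=0$ for the same matrix $P(\lambda)$, and then decide singularity of a $2\times 2$ matrix over the division ring $Q$ by direct elimination. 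This buys two things: the argument is elementary and does not depend on the unproved (and unreferenced) quaternionic Schur-complement identity or on the equivalence between singularity and vanishing Dieudonn\'{e} determinant; and it covers the $e(\lambda)=0$ branch explicitly, filling a genuine gap in the paper's own proof. The delicate points --- clearing $e^{-1}$ via the central norm $e\overline{e}\in F$ rather than by two-sided division, and preserving the factor order in $g\overline{e}f$ --- are exactly where care is needed over a noncommutative ring, and you handle both correctly.
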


\begin{proof}
Let $M$ be a $4 \times 4$ quaternion matrix.
Therefore $M=
\left[ \begin{array}{lr} A  & B \\ C &  D  \end{array}
\right]$ where $A,B,C,D$ are $2 \times 2$ quaternion matrices.

An element $\lambda$ is a left eigenvalue if $\det(M-\lambda I)=0$.
Assuming that $\det(C) \neq 0$, we have $\det(M-\lambda I)=\det(C (A-\lambda I) C^{-1} (D-\lambda I)-C B)$ (This is an easy result of the Schur complements identity for complex matrices extended to quaternion matrices in ).

The matrix $C (A-\lambda I) C^{-1} (D-\lambda I)-C B$ is equal to $
\left[ \begin{array}{lr} e(\lambda)  & f(\lambda) \\ g(\lambda) &  h(\lambda)  \end{array}
\right]$
for some quadratic polynomials $e,f,g,h$.

Now, if $e(\lambda) \neq 0$ then $\det
\left[ \begin{array}{lr} e(\lambda)  & f(\lambda) \\ g(\lambda) &  h(\lambda)  \end{array}
\right]=0$ if and only if \\
$h(\lambda)-g(\lambda) e(\lambda)^{-1} f(\lambda) = 0$.

This happens if and only if $e(\lambda)\overline{e(\lambda)} h(\lambda)-g(\lambda) \overline{e(\lambda)} f(\lambda)=0$.

\end{proof}

As we saw in Subsection \ref{example}, $\overline{e(\lambda)}$ is also a quadratic polynomial,
which means that $e(\lambda)\overline{e(\lambda)} h(\lambda)-g(\lambda) \overline{e(\lambda)} f(\lambda)$ is a polynomial of degree $6$, while the characteristic polynomial of $M$ as defined in Section \ref{char} is of degree $8$.

%%%%%%%%%%%%%%%%%%%%%%%%%%%%%%%%%%%%%%%%%%%%%%%%%%%%%%%%%%%%%

\end{document}